\newif\ifarxiv
\pgfplotsset{
    boxplotstyle/.style = {
        figurestyle,
        boxplot/draw direction = x,
        yticklabels = {0.05, 0.15, 0.25, 0.35, 0.45},
        ytick = {1,2,3,4,5},
        cycle list = {{fill = blue, draw = black, thick}, {pattern = crosshatch, pattern color = red, draw = black, thick}},
        boxplot/box extend = 0.25
    },
    colorbarstyle/.style = {
        axis line style = thick,
        at = {(1.1, 0)},
        anchor = south west,
        height = 0.35\linewidth,
        title style = {font = \small}
    },
    every axis legend/.append style = {
        at = {(0.5,1.02)},
        anchor = south,
        legend columns = 2,
        font = \small,
        draw = none
    }
}
\newcommand{\y}[1][]{\vc{y}_{#1}}
\newcommand{\yhat}[1][]{\vchat{y}_{#1}}
\newcommand{\ytrue}[1][]{\vc{x}_{#1}}
\newcommand{\pole}[1][]{p_{#1}}
\newcommand{\estpole}[1][]{\widehat{p}_{#1}}
\newcommand{\misfit}{\vctilde{y}}
\newcommand{\toep}[2]{\mt{T}_{#1}^{#2}}
\newcommand{\tnum}[2][1.4]{\tablenum[table-format = #1]{#2}}
\title{Incorporating Fixed-Pole Information in the Data-Driven Least Squares Realization Problem\thanks{This work was supported in part by the KU Leuven Research Fund (grants IBOF/23/064, C3/20/117, and C3I/21/00316); in part by the FWO (grants S005319 and T001919N); in part by the Departement Economie, Wetenschap \& Innovatie via the Flanders AI Research Program; in part by the Vlaams Agentschap Innoveren \& Ondernemen (grant HBC/2021/0076); and in part by the European Research Council (grant 885682). The work of Sibren Lagauw was supported by an FWO fellowship (grant FR/11K5625N).}}
\author[$\dagger$, $\ddagger$, $\S$]{Christof Vermeersch}
\author[$\ddagger$, $\S$]{Sibren Lagauw}
\author[$\ddagger$]{Bart De Moor}
\affil[$\dagger$]{Corresponding author (\url{christof.vermeersch@esat.kuleuven.be})}
\affil[$\ddagger$]{Center for Dynamical Systems, Signal Processing, and Data Analytics (STADIUS), Dept. of Electrical Engineering (ESAT), KU Leuven, Kasteelpark Arenberg 10, 3001 Leuven, Belgium}
\affil[$\S$]{Both authors contributed equally to this work}
\begin{document}
    \maketitle
    
    \begin{abstract}
    In practical least squares realization problems, partial information about the pole locations of the dynamical model may be known \textit{a priori}. 
    Existing techniques for incorporating this prior knowledge, such as prefiltering the given data, are typically heuristic and lack theoretical guarantees. 
    We extend our previously developed globally optimal estimation approach to accommodate fixed poles in the least squares realization problem.
    In particular, we reformulate the problem as a (rectangular) multiparameter eigenvalue problem, the eigenvalues of which characterize all local and global minimizers of the constrained estimation problem. 
    We present numerical examples to demonstrate the effectiveness of the proposed method and experimentally validate the paper's central hypothesis: incorporating \textit{a priori} information on the poles enhances the estimation results.
\end{abstract}

\section{Introduction}
    \label{sec:introduction}

    Given a sequence of (discrete-time) output data, the \emph{standard least squares realization problem} addresses the question ``how can we modify the given data points in a least squares sense so that they become the output of an autonomous, single-output, linear time-invariant dynamical model, and what are the corresponding model parameters?''
    For a predetermined model order, the realized model is completely characterized by the coefficients or roots of its characteristic equation. 
    Although the model class is linear in the parameters, the least squares realization problem of finding the smallest adaptation of the observed output data is nonlinear in both the model parameters and misfit, resulting in a non-convex optimization problem with potentially many local optima (see \cref{fig:contour}).

    Many approaches for determining the coefficients or poles of the model exist, with first occurrences in the literature dating back to Prony in the 18th century~\cite{Hauer1990}.
    Recently, globally optimal approaches that minimize the misfit between the observed and modified output data have been proposed by some of the authors~\cite{DeMoor2020, Lagauw2025}.  

    Most methods for solving the least squares realization problem, including the globally optimal approaches in~\cite{DeMoor2020} and~\cite{Lagauw2025}, only consider the observed output data and a predetermined model order.
    However, some of the model's poles may be known (``fixed'') in practical settings. 
    Such situations commonly arise in process control, structural dynamics, and econometrics, where certain modes are either predetermined or estimated from first principles~\cite{Auton1981}.
    Similarly, when estimating parameters for exponential signals, information about the signal properties is often available from the physical setup, previous experiments, or dynamics introduced to model the noise disturbances~\cite{Chen1997,Chen1997a}.
    Using this information improves the estimation quality in a certain sense---\emph{the paper's central hypothesis.}
    Therefore, we extend our globally optimal method developed in~\cite{Lagauw2025} to incorporate fixed-pole information, specifically improving the retrieval of the exact system output dynamics in a noisy context.

    The idea that using information about the fixed-pole locations may improve the estimation quality is not new. 
    Three alternative versions of Prony's method were already described in~\cite{Auton1981}, each constraining the original method in a different way to handle the fixed poles.
    The linearly constrained total least squares (TLS) method employs linear constraints to incorporate \textit{a priori} knowledge about the poles of the model~\cite{Dowling1994}.  
    And, in the context of nuclear magnetic resonance spectroscopy, the Hankel total least squares (HTLS) method has been extensively used for spectral estimation~\cite{Hua1991}, with several variants, the so-called HTLS-PK methods~\cite{Chen1997a, Chen1997}, incorporating \textit{a priori} knowledge on the signal properties.
    Alternatively, rather than adapting parameter estimation techniques to deal with the fixed poles, one could also prefilter the observed output data in an attempt to remove the effects of the fixed poles before estimation. 
    A naive approach is to start the realization procedure from the optimal misfit associated to the lower-order model defined by the fixed poles.
    The time series deflation (TSD) technique~\cite{Majda1989} is a second heuristic that applies a moving-average prefilter to the observed output data to remove the known components. 

    Although these heuristic methods provide practical solutions, a globally optimal approach for dealing with fixed poles does not yet exist.
    The approach that we take in this paper starts from our globally optimal methodology developed in~\cite{Lagauw2025}, the so-called \emph{standard globally optimal realization (S-GOR) technique}.
    If one or more poles of the model are known beforehand, then the new \emph{fixed-pole globally optimal realization (FP-GOR) technique} finds the globally optimal approximation of the remaining, unknown poles. 
    We end up with a (rectangular) multiparameter eigenvalue problem (MEP), which is fundamentally different than the above-mentioned heuristics.
    It is the first globally optimal approach that takes the fixed poles into account, replacing engineering practice by exactness at the cost of a higher computational complexity.
    The computational complexity, however, is lower than of the standard technique since the number of poles to estimate is lower.
    We validate the paper's central hypothesis that \textit{a priori} information improves the estimation under noise in the last numerical example; the model-compliant output data estimated by our new FP-GOR approach are, in a noisy context, closer to the underlying exact system outputs than those obtained via S-GOR.

    The remainder of the paper continues as follows:
    We give a mathematical description of the problem and a motivational example in \cref{sec:problemdefinition}. 
    Next, in \cref{sec:methodology}, we explain how we obtain, via the first-order optimality conditions, an MEP. 
    \emph{The MEP reformulation of the fixed-poles least squares realization problem is our main contribution.}
    We solve three examples numerically in \cref{sec:numericalexamples}, including an extensive experiment with different noise levels.
    Finally, we conclude this paper and look towards future work in \cref{sec:conclusion}.

    \begin{figure}
        \centering
        \input{figures/contour.tikz}
        \vspace{-0.2cm}
        \caption{Misfit $\norm{\misfit}_2^2$ of the least squares realization problem in \cref{ex:motivational} plotted against the model parameters $a_1$ and $a_2$, normalized by setting $a_0 = 1$. The surface shows the non-convex nature of the underlying optimization problem, with the critical points of the standard least squares objective function being minimizers (~\ref{plot:std-minimum}~), saddle points (~\ref{plot:std-saddlepoint}~), and maximizers (~\ref{plot:std-maximum}~). Given a fixed pole, the fixed-pole least squares realization problem is subject to an additional constraint (\ref{plot:constraint}). A different minimizer (~\ref{plot:fp-minimum}~) and maximizer (~\ref{plot:fp-maximum}~) are obtained in this example. The two heuristic solutions discussed in \cref{ex:motivational} are also shown (~\ref{plot:heuristics}~).}
        \label{fig:contour}
        \vspace{-0.4cm}
    \end{figure}

\section{Problem definition and motivational example}
    \label{sec:problemdefinition}

    We consider an autonomous, single-output, linear time-invariant dynamical model in discrete time with a predefined model order $n$ that corresponds to the number of states.
    Model-compliant output data $\yhat \in \Rset^N$, with $N > 2 n$ data points $\widehat{y}_k$, satisfy a difference equation of the form 
    \begin{equation}
        \label{eq:differenceequation}
        a_0 \widehat{y}_{k + n} + a_1 \widehat{y}_{k + n - 1} + \ldots + a_n \widehat{y}_k = a(z) \widehat{y}_k = 0,
    \end{equation}
    for all $k = 1, 2, \ldots, N - n$, where $a(z) = a_0 z^n + a_1 z^{n - 1} + \ldots + a_n$ is a degree $n$ polynomial in the forward-shift operator $z$ (i.e., $z \widehat{y}_k = \widehat{y}_{k + 1}$).
    Writing out~\eqref{eq:differenceequation} for every $k$, 
    \begin{equation}
        \label{eq:toeplitzdefinition}
        \begin{bmatrix}
            a_n  & \cdots & a_1 & a_0 & 0 & \cdots & 0 \\
            0 & a_n  & \cdots & a_1 & a_0  & \ddots & \vdots \\
            \vdots  & \ddots & \ddots  &  & \ddots  & \ddots & 0 \\
            0  & \cdots & 0 & a_n & \cdots & a_1 & a_0 \\
        \end{bmatrix} \yhat = \vc{0}
    \end{equation} 
    expresses the model via the (banded) Toeplitz matrix $\toep{N - n}{\vc{a}} \in \Rset^{(N - n) \times N}$ for the coefficient vector $\vc{a} = \begin{bmatrix} a_n & \cdots & a_1 & a_0 \end{bmatrix}^\tr \in \Rset^{n + 1}$. 
    The kernel of $\toep{N - n}{\vc{a}}$ is spanned by Vandermonde vectors generated by the roots of $a(z)$.
    Consequently, the model-compliant output data $\yhat$ can be expressed as a weighted sum (with coefficients $x_i$) of exactly $n$ linearly independent (confluent\footnote{Since our methodology uses the coefficients of $a(z)$ to parameterize the model, it also holds for models with multiple poles (see how in~\cite{DeMoor2020}).}) Vandermonde vectors $\vc{v}_i = \begin{bmatrix} 1 & \pole[i] & \cdots & \pole[i]^{N - 1} \end{bmatrix}^\tr \in \Cset^N$ for each pole $\pole[i]$:
    \begin{equation}
        \label{eq:image}
        \yhat = \sum_{i = 1}^n \vc{v}_i x_i.
    \end{equation}

    \begin{definition}
        Given observed output data $\y \in \Rset^N$, the \emph{standard least squares realization problem} seeks to determine the model parameters $\vc{a} \in \Rset^{n + 1}$ of the $n$th-order model that minimizes the $l_2$ norm of the misfit $\misfit = \y - \yhat$. 
        It corresponds to the following optimization problem:
        \begin{equation}
            \label{eq:fulloptimizationproblem}
            \begin{gathered}
                \min_{\vc{a}, \yhat} \frac{1}{2} \norm{\misfit}^2_2 =\min_{\vc{a}, \yhat} \frac{1}{2} \norm{\y - \yhat}^2_2, \\
                \sub \toep{N - n}{\vc{a}} \yhat = \vc{0}, \left\langle \vc{a} , \vc{e} \right\rangle = 1,\\ 
            \end{gathered}
        \end{equation} 
        where the last constraint avoids the trivial solution $\vc{a} = \vc{0}$. 
    \end{definition}

    The \emph{normalization vector} $\vc{e} \in \Rset^{n + 1}$ in~\eqref{eq:fulloptimizationproblem} must have at least one non-zero element.
    A commonly used choice is $\vc{e} = \begin{bmatrix} 0 & \cdots & 0 & 1 \end{bmatrix}^\tr$, which boils down to setting $a_0 = 1$. 

    Suppose that $m < n$ poles, $\{ \pole[i] \}_{i = 1}^m$, of the model are fixed, such that the model can be factorized as 
    \begin{equation}
        \label{eq:factorization}
        a(z) = b(z) \prod_{i = 1}^m (z - \pole[i]) = b(z) c(z), 
    \end{equation} 
    where $b(z)$ is a $q$th-order polynomial in the forward-shift $z$ with unknown coefficients $\vc{b} = \begin{bmatrix} b_q & \cdots & b_1 & b_0 \end{bmatrix}^\tr \in \Rset^{q + 1}$, in which we introduce $q = n - m$ for notational convenience.
    The coefficients $\vc{c} = \begin{bmatrix} c_m & \cdots & c_1 & 1 \end{bmatrix}^\tr \in \Rset^{m + 1}$ of the $m$th-order polynomial $c(z) = \prod_{i = 1}^m (z - \pole[i])$ follow directly from the fixed poles.
    After defining (banded) Toeplitz matrices $\toep{N - q}{\vc{b}}$ and $\toep{N - n}{\vc{c}}$ from $\vc{b}$ and $\vc{c}$, the factorization in~\eqref{eq:factorization} can be used to rewrite~\eqref{eq:toeplitzdefinition} as
    \begin{equation}
        \label{eq:filter}
        \toep{N - n}{\vc{a}} \yhat = \toep{N - n}{\vc{c}} \toep{N - q}{\vc{b}} \yhat = \vc{0}. 
    \end{equation} 
    The order of $\toep{N - q}{\vc{b}}$ and $\toep{N - n}{\vc{c}}$ can be changed as long as the dimensions are adjusted appropriately. 
    Indeed,
    \begin{equation}
        \label{eq:swapped}
        \toep{N - n}{\vc{c} \cdot \vc{b}} = \, \toep{N - n}{\vc{c}} \toep{N - q}{\vc{b}} = \toep{N - n}{\vc{b}} \toep{N - m}{\vc{c}}. 
    \end{equation}
    Requiring the model $a(z)$ to have certain poles in~\eqref{eq:image} fixes $m$ of the Vandermonde vectors:
    \begin{equation}
        \label{eq:restrictedimage}
        \yhat = \sum_{i = 1}^m \vc{v}_i x_i + \sum_{i = m + 1}^n \vc{v}_i x_i = \sum_{i = 1}^m \vc{v}_i x_i + \yhat'.
    \end{equation}

    \begin{definition}
        Given observed output data $\y \in \Rset^N$ and $m$ fixed poles $\{ \pole[i] \}_{i = 1}^m$, the \emph{fixed-pole least squares realization problem} seeks to determine the model parameters $\vc{b} \in \Rset^{q + 1}$ of the unknown factor of the $n$th-order model that minimizes the $l_2$ norm of the misfit $\misfit = \y - \yhat$.
        It corresponds to the following optimization problem:
        \begin{equation}
            \label{eq:fixedpolesoptimizationproblem}
            \begin{gathered}
                \min_{\vc{b}, \yhat} \frac{1}{2} \norm{\misfit}^2_2 = \min_{\vc{b}, \yhat} \frac{1}{2} \norm{\y - \yhat}^2_2, \\
                \sub \toep{N - n}{\vc{c}} \toep{N - q}{\vc{b}} \yhat = \vc{0}, \left\langle \vc{b} , \vc{e} \right\rangle = 1, \\ 
            \end{gathered}
        \end{equation} 
        where the last constraint with adapted $\vc{e} \in \Rset^{q + 1}$ avoids the trivial solution $\vc{b} = \vc{0}$. 
    \end{definition}

    One could wonder whether an adaptation of the least squares methodology is truly necessary to be able to cope with fixed poles. 
    Would it not suffice to remove the effects of the given poles from the observed output data, i.e., to prefilter the data and use standard least squares realization techniques to estimate the remaining poles?
    We motivate our methodology by considering two (heuristic) prefiltering approaches. 

    \begin{example}[Motivational example]
        \label{ex:motivational}
        Given the output data
        \begin{equation}
            \label{eq:data}
            \y = 
            \begin{bmatrix}
                \num{3.0000} \\
                \num{5.0000} \\
                \num{2.0000} \\
                \num{3.0000} \\
                \num{4.0000} \\
                \num{2.0000} \\
                \num{3.0000} 
            \end{bmatrix},
        \end{equation}
        we want to estimate a second-order ($n = 2$) model, for which one of its poles is fixed by
        \begin{equation}
            \label{eq:fixedpole}
            \pole[1] = \num{-0.9557} 
        \end{equation}
        via one of the above-mentioned heuristics.\footnote{We adopt a four-digit precision convention for all floating-point numbers.}
        The misfit $\norm{\misfit}_2^2$ in terms of the coefficients $a_1$ and $a_2$ is visualized in \cref{fig:contour}.
        \begin{itemize}
            \item By using an orthogonal projection~\cite[Eq. (25)]{Lagauw2025}, we can compute the misfit $\misfit'$ associated with the first-order model defined by $\pole[1]$.
            We estimate the second pole by performing another $n = 1$ estimation, using the misfit $\misfit'$ as the `observed' output data.
            The most dominant pole present in the first-order misfit $\misfit'$, $\pole[2] = \num{0.8630}$, results in a misfit $\misfit$ with $\norm{\misfit}_2^2 = \num{8.4181}$.
            \item Alternatively, we can prefilter the observed output data $\y$ via the TSD technique described in~\cite{Majda1989}:
            \begin{equation}
                \y' = \toep{N-m}{\vc{c}} \y.
            \end{equation}
            After solving the standard least squares realization problem~\eqref{eq:fulloptimizationproblem} for a first-order model with $\y'$ as the `observed' output data, we find a second $\pole[2] = \num{0.9361}$, yielding a misfit $\norm{\misfit}_2^2 = \num{6.0070}$.
        \end{itemize}
        From \cref{fig:contour}, it is clear that these two heuristics do not find the global minimum and result in a larger misfit than the global minimum.
        The observation that both prefiltering approaches are suboptimal w.r.t.~\eqref{eq:fixedpolesoptimizationproblem} remains valid for other problem setups (i.e., different $N$, $n$, or $m$).
        This suboptimality is explained in \cref{rem:prefiltering}.
    \end{example}

\section{Methodology}
    \label{sec:methodology}

    Below, we characterize all minimizers of the fixed-pole optimization problem~\eqref{eq:fixedpolesoptimizationproblem} by adapting the standard globally optimal realization methodology from~\cite{Lagauw2025} and establishing a globally optimal realization approach for the fixed-pole setting.
    We start by reformulating the first-order necessary conditions for optimality as a matrix rank problem (\cref{sec:characterization}). 
    After showing that the minimizers result in a given rank (\cref{sec:lowerordersolutions}), we rephrase the fixed-pole problem as an MEP (\cref{sec:reformulation}).
    Finally, we discuss how the obtained MEP can be solved to find the globally optimal model subject to the fixed poles (\cref{sec:algorithms}).

    \subsection{Characterizing first-order optimality}
        \label{sec:characterization}

        With the introduction of Lagrange multipliers $\vc{\lambda} \in \Rset^{N - n}$ and $\mu \in \Rset$, the Lagrangian of~\eqref{eq:fixedpolesoptimizationproblem} is equal to
        \begin{equation}
            \begin{aligned}
                \mathcal{L}(\vc{b}, \yhat, \vc{\lambda}, \mu) &= \frac{1}{2} \norm{\misfit}_2^2 + \vc{\lambda}^\tr \toep{N - n}{\vc{c}} \toep{N - q}{\vc{b}} \yhat \\
                &+ \mu \left(\vc{b}^\tr \vc{e} - 1\right).
            \end{aligned}
        \end{equation} 
        The first-order necessary conditions for optimality can be obtained via the partial derivatives as
        \begin{align}
            \label{eq:fonc1}
             &\partial \mathcal{L} / \partial \vc{b} = \mthat{Y}^\tr (\toep{N - n}{\vc{c}})^\tr \vc{\lambda} + \mu \vc{e} = \vc{0}, \\
            \label{eq:fonc2}
             &\partial \mathcal{L} / \partial \yhat  = \yhat - \y + (\toep{N - q}{\vc{b}})^\tr (\toep{N - n}{\vc{c}})^\tr \vc{\lambda} = \vc{0}, \\
            \label{eq:fonc3}
             &\partial \mathcal{L} / \partial \vc{\lambda} = \toep{N - n}{\vc{c}} \toep{N - q}{\vc{b}} \yhat = \toep{N - n}{\vc{c}} \mthat{Y} \vc{b} = \vc{0}, \\ 
            \label{eq:fonc4}
             &\partial \mathcal{L} / \partial \mu = \vc{b}^\tr \vc{e} - 1 = 0,
        \end{align} 
        where $\mthat{Y} \in \Rset^{(N - q) \times (q + 1)}$ denotes the Hankel matrix constructed from the model-compliant output data $\yhat$.
        The real-valued solutions that satisfy~\eqref{eq:fonc1}--\eqref{eq:fonc4} are the \emph{critical points} of the problem.
        Multiplying~\eqref{eq:fonc1} from the left by $\vc{b}^\tr$ yields
        \begin{equation}
            \vc{b}^\tr \mthat{Y}^\tr (\toep{N - n}{\vc{c}})^\tr \vc{\lambda} + \mu \vc{b}^\tr \vc{e} = \vc{0} \iff \mu = 0,
        \end{equation} 
        by using that $\toep{N - n}{\vc{c}} \mthat{Y} \vc{b} = \vc{0}$, from~\eqref{eq:fonc3}, and $\vc{b}^\tr \vc{e} = 1$, from~\eqref{eq:fonc4}. 
        Therefore, we can set $\mu = 0$ from this point on. 
        As such,~\eqref{eq:fonc1} implies that
        \begin{equation}
            \label{eq:walsh}
            \mthat{Y}^\tr (\toep{N-n}{\vc{c}})^\tr \vc{\lambda} = \vc{0} \iff \vc{\lambda}^\tr \toep{N - n}{\vc{c}} \mthat{Y} = \vc{0}, 
        \end{equation} 
        meaning that the vector of Lagrange multipliers $\vc{\lambda}$ lies in the left kernel space of the filtered Hankel matrix $\mthat{Y}' = \toep{N - n}{\vc{c}} \mthat{Y} \in \Rset^{(N - n) \times (q + 1)}$. 
        We know from~\eqref{eq:fonc3} that the filtered Hankel matrix has to be rank-deficient, such that
        \begin{equation}
            \label{eq:hankelrank}
            \rank \mthat{Y}' \leq q.
        \end{equation} 
        Furthermore,~\eqref{eq:fonc3} shows that the output data $\yhat$ are model-compliant w.r.t. $a(z)$, while the filtered output data $\yhat' = \toep{N - n}{\vc{c}} \yhat$ are model-compliant w.r.t. $b(z)$.

    \subsection{Solutions with lower-order dynamics}
        \label{sec:lowerordersolutions}

        Similar as in the standard case without fixed poles~\cite{Lagauw2025}, we show that for all (local) minimizers of the fixed-pole optimization problem~\eqref{eq:fixedpolesoptimizationproblem} equality holds in~\eqref{eq:hankelrank}; a solution with lower-order dynamics can not be a (local) minimizer of~\eqref{eq:fixedpolesoptimizationproblem}.
        Strict inequality in~\eqref{eq:hankelrank} implies that the model-compliant output data $\yhat$ do not employ all degrees of freedom allowed by $a(z)$.
        At least one of the poles of $a(z)$ can be removed without changing the optimal $\yhat$.

        \begin{lemma}
            \label{lem:lowerordersolutions}
            Given output data $\yhat$ that comply with a model $a(z) = \, b(z) \prod_{i  = 1}^m (z - \pole[i])$, of which the order is $n$ and the parameters $\vc{b}$ correspond to a (local) minimizer of~\eqref{eq:fixedpolesoptimizationproblem}. 
            The corresponding filtered model-compliant output data $\yhat'$ have exactly $q$th-order dynamics, i.e., $\rank \mthat{Y}' = q$. 
        \end{lemma}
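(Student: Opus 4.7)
The plan is to argue by contradiction, adapting the strategy that~\cite{Lagauw2025} uses for the standard (no fixed poles) case. Suppose $\rank \mthat{Y}' \leq q-1$. Then the filtered output data $\yhat' = \toep{N-n}{\vc{c}} \yhat$ are annihilated by a polynomial of degree strictly less than $q$, so $\yhat$ satisfies a difference equation of order at most $n-1$ in which the factor $c(z)$ is still present. Consequently, $b(z)$ admits a factorization $b(z) = (z-r)\tilde{b}(z)$ in which the root $r$ is \emph{extraneous}: the dynamics of $\yhat$ are unaffected by its removal (for a complex-conjugate extraneous pair, the same argument applies to the corresponding real quadratic factor).

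We would then construct an admissible one-parameter perturbation $(\vc{b}(t), \yhat(t))$ starting from $(\vc{b}, \yhat)$ at $t = 0$: move the extraneous root from $r$ to $r+t$, rescale so that $\vc{b}(t)^\tr \vc{e} = 1$ is preserved, and simultaneously add a small multiple $\epsilon\, \vc{v}_{r+t}$ of the Vandermonde vector associated with the new pole to $\yhat$. Feasibility of the perturbed pair for~\eqref{eq:fixedpolesoptimizationproblem} follows from~\eqref{eq:image}: $\yhat$ is already model-compliant with $\tilde{b}(z) c(z)$, and $\vc{v}_{r+t}$ lies in the kernel of the perturbed Toeplitz product $\toep{N-n}{\vc{c}}\toep{N-q}{\vc{b}(t)}$.

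The first-order change of the objective along this perturbation is proportional to $\langle \misfit, \vc{v}_{r+t}\rangle$. If this inner product were to vanish for every small $t$, then by analyticity of $s \mapsto \vc{v}_s$ it would vanish for every $s \in \Rset$, and since $N$ distinct real Vandermonde vectors form a basis of $\Rset^N$, this would force $\misfit = \vc{0}$ — a degenerate situation in which the lemma can be handled separately. Otherwise, choosing the sign of $\epsilon$ appropriately produces a strict decrease of $\norm{\misfit}_2^2$, contradicting the local minimality of $(\vc{b}, \yhat)$. The main obstacle we anticipate is the careful bookkeeping of the perturbation — in particular enforcing the normalization throughout and handling a complex-conjugate extraneous pair so that $\vc{b}(t)$ remains real — together with disposing cleanly of the $\misfit = \vc{0}$ degeneracy.
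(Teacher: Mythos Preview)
Your proposal is correct and takes essentially the same approach as the paper's (sketched) proof: both identify that $\rank \mthat{Y}' < q$ forces $b(z)$ to carry a non-contributing pole, then perturb that pole and exploit the resulting new Vandermonde direction to strictly decrease the misfit, contradicting local minimality. Your version is in fact more explicit than the paper's sketch---you spell out the two-parameter perturbation $(t,\epsilon)$ and the analyticity argument that forces $\misfit = \vc{0}$ in the degenerate case, whereas the paper simply invokes ``the orthogonality induced by the least squares measure'' and defers details to~\cite[Section~5]{Lagauw2025}.
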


        \begin{proof}
            A formal proof of \cref{lem:lowerordersolutions} is omitted due to space constraints, but the necessary steps follow closely the reasoning in~\cite[Section~5]{Lagauw2025}.
            In particular, the multiplication of $\yhat$ with $\toep{N - n}{\vc{c}}$ annihilates the first $m$ terms in~\eqref{eq:restrictedimage}, so that $\vchat{y}' = \toep{N - n}{\vc{c}} \vchat{y} = \sum_{i = m + 1}^n \vc{v}_i x_i$ is determined solely by the zeros of $b(z)$.
            If $\rank \mthat{Y}'< q$ with $\mthat{Y}' = \toep{N - n}{\vc{c}} \mthat{Y}$, then $b(z)$ contains at least one non-contributing pole, i.e., a pole that does not influence $\yhat'$, and it can be removed without altering $\yhat$ or $\yhat'$.
            The orthogonality induced by the least squares measure implies that a small perturbation of the coefficients of $b(z)$ (which only modifies the non-contributing pole) leads to a strict reduction in the $l_2$ norm of the misfit.
        \end{proof}

    \subsection{Multiparameter eigenvalue problem}
        \label{sec:reformulation}

        If equality holds in~\eqref{eq:hankelrank}, then the filtered Hankel matrix $\mthat{Y}'$ has a left kernel space of dimension $ N - 2n + m$. 
        From~\eqref{eq:fonc3} and~\eqref{eq:swapped}, it follows that
        \begin{equation}
            \toep{N -2n + m}{\vc{b}} \toep{N - n}{\vc{c}} \mthat{Y} = \vc{0},
        \end{equation} 
        which, via~\eqref{eq:walsh}, implies that for all critical points for which equality holds in~\eqref{eq:hankelrank}, we can express $\vc{\lambda}$ as
        \begin{equation}
            \label{eq:doublefir}
            \vc{\lambda} = (\toep{N - 2n + m}{\vc{b}})^\tr \vc{g} 
        \end{equation} 
        for some $\vc{g} \in \Rset^{N - 2n + m}$.
        Substitution of~\eqref{eq:doublefir} into~\eqref{eq:fonc2} and multiplication from the left by $\toep{N - n}{\vc{a}} = \toep{N - n}{\vc{c}} \toep{N - q}{\vc{b}}$ leads to a characterization of this subset of critical points:
        \begin{equation}
            \label{eq:doublefirsystem}
            -\toep{N - n}{\vc{a}} \y + \toep{N - n}{\vc{a}} (\toep{N - n}{\vc{a}})^\tr (\toep{N - 2n + m}{\vc{b}})^\tr \vc{g} = \vc{0}.
        \end{equation} 
        Together with the non-triviality constraint~\eqref{eq:fonc4},~\eqref{eq:doublefirsystem} is a system of $N - n + 1$ (at most quartic) polynomial equations in the $N - n + 1$ variables $\vc{b}$ and $\vc{g}$.

        \begin{theorem}
            \label{thm:fixedpoles}
            Consider the MEP $\mcl{A}(\vc{b}) \vc{z} = \vc{0}$ defined by the $(N - n) \times (N - 2n + m + 1)$ cubic matrix polynomial 
            \begin{equation}
                \label{eq:matrixpolynomial}
                \mcl{A}(\vc{b}) = 
                \begin{bmatrix}
                    \toep{N - n}{\vc{c} \cdot \vc{b}} \y & \toep{N - n}{\vc{c} \cdot \vc{b}} (\toep{N - n}{\vc{c} \cdot \vc{b}})^\tr (\toep{N - 2n + m}{\vc{b}})^\tr 
                \end{bmatrix}, 
            \end{equation} 
            where $\toep{N - n}{\vc{c} \cdot \vc{b}} = \toep{N - n}{\vc{c}} \toep{N - q}{\vc{b}}$ depends only on $\vc{b}$ since $\vc{c}$ is fixed. 
            The MEP is non-square for typical problem sizes.
            The eigenvalues are values of $\vc{b} \in \Rset^{q + 1}$ for which $\mcl{A}(\vc{b})$ drops rank so that there exists a non-zero vector $\vc{z} \in \Cset^{N - 2n + m + 1}$.
            Each real-valued, affine eigenvalue of~\eqref{eq:matrixpolynomial} corresponds to a critical point of~\eqref{eq:fixedpolesoptimizationproblem} and the (local) minimizers are a subset of these affine eigenvalues.
        \end{theorem}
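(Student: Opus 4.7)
The plan is to combine the first-order necessary conditions derived in \cref{sec:characterization} with \cref{lem:lowerordersolutions} and recast them as a rank-drop condition on the matrix polynomial $\mcl{A}(\vc{b})$. Having already reduced $\mu = 0$ and confirmed that $\vc{\lambda}$ lies in the left null space of the filtered Hankel matrix $\mthat{Y}'$, the remaining information in \eqref{eq:fonc1}--\eqref{eq:fonc4} is captured, under the assumption $\rank \mthat{Y}' = q$, by \eqref{eq:doublefirsystem} together with the normalization \eqref{eq:fonc4}. Since \cref{lem:lowerordersolutions} says every (local) minimizer of \eqref{eq:fixedpolesoptimizationproblem} lives in this rank-$q$ regime, restricting to it is harmless for the minimizer claim.

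The forward direction is an algebraic identification. Writing $\vc{z} = \begin{bmatrix} -1 & \vc{g}^\tr \end{bmatrix}^\tr \in \Rset^{N - 2n + m + 1}$, the block structure of \eqref{eq:matrixpolynomial} yields
\begin{equation*}
\mcl{A}(\vc{b}) \vc{z} = -\toep{N - n}{\vc{a}} \y + \toep{N - n}{\vc{a}} (\toep{N - n}{\vc{a}})^\tr (\toep{N - 2n + m}{\vc{b}})^\tr \vc{g},
\end{equation*}
which is exactly the left-hand side of \eqref{eq:doublefirsystem}. Hence every rank-$q$ critical point, together with its associated $\vc{g}$, produces a real-valued $\vc{b}$ and a non-trivial vector $\vc{z}$ with non-zero first component, i.e., a real-valued affine eigenvalue.

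For the converse, I would take any affine eigenvalue $\vc{b}$ satisfying $\vc{b}^\tr \vc{e} = 1$, rescale its eigenvector so that its first coordinate equals $-1$, read off $\vc{g}$ from the remaining coordinates, and then reconstruct $\vc{\lambda}$ via \eqref{eq:doublefir} and $\yhat$ via \eqref{eq:fonc2}. The vanishing of $\mcl{A}(\vc{b}) \vc{z}$ reproduces \eqref{eq:doublefirsystem}; combined with the reconstructed $\yhat$, this forces $\toep{N - n}{\vc{a}} \yhat = \vc{0}$, which is \eqref{eq:fonc3}. Condition \eqref{eq:fonc1} then holds because $\vc{\lambda}$ is by construction in the image of $(\toep{N - 2n + m}{\vc{b}})^\tr$, and an identity analogous to \eqref{eq:swapped}, namely $\toep{N - 2n + m}{\vc{b}} \toep{N - n}{\vc{c}} = \toep{N - 2n + m}{\vc{a}}$, ensures that every such $\vc{\lambda}$ lies in the left null space of $\mthat{Y}'$ once $\yhat$ is model-compliant with $a(z)$.

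The delicate point is this reverse direction: premultiplying \eqref{eq:fonc2} by $\toep{N - n}{\vc{a}}$ to produce \eqref{eq:doublefirsystem} is in principle lossy, so one must argue that no spurious Lagrange multipliers are introduced when reconstructing $\vc{\lambda}$ from $\vc{g}$. I expect a dimension-counting argument to close this gap, exploiting the rank-$q$ assumption to identify the admissible space of $\vc{\lambda}$ with the range of $(\toep{N - 2n + m}{\vc{b}})^\tr$. Combining the two directions with \cref{lem:lowerordersolutions} then establishes both halves of the theorem: each real-valued affine eigenvalue of \eqref{eq:matrixpolynomial} corresponds to a critical point of \eqref{eq:fixedpolesoptimizationproblem}, and the (local) minimizers form a subset of these eigenvalues.
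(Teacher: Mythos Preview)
Your approach is essentially the paper's: both obtain the theorem by combining \cref{lem:lowerordersolutions} with the derivation of \eqref{eq:doublefirsystem}, then reading \eqref{eq:doublefirsystem} as $\mcl{A}(\vc{b})\vc{z}=\vc{0}$ with $\vc{z}=\begin{bmatrix}-1 & \vc{g}^\tr\end{bmatrix}^\tr$. Your forward direction is fine and your reconstruction in the reverse direction (define $\vc{\lambda}$ via \eqref{eq:doublefir}, then $\yhat$ via \eqref{eq:fonc2}, so that $\mcl{A}(\vc{b})\vc{z}=\vc{0}$ forces $\toep{N-n}{\vc{a}}\yhat=\vc{0}$, whence \eqref{eq:fonc1} follows from the swapping identity) is the right mechanism.

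However, you skip the one technical point the paper's short proof actually spells out. You write ``rescale its eigenvector so that its first coordinate equals $-1$'' without arguing that the first coordinate is nonzero. The paper handles this by observing that for real $\vc{a}$ the second block $\toep{N-n}{\vc{a}}(\toep{N-n}{\vc{a}})^\tr(\toep{N-2n+m}{\vc{b}})^\tr$ has full column rank; hence a null vector $\vc{z}$ with vanishing first entry would force $\vc{g}=\vc{0}$ and thus $\vc{z}=\vc{0}$. This full-column-rank fact is precisely what makes the eigenvalue \emph{affine} and is the hinge of the converse. Conversely, the ``delicate point'' you flag about lossiness from premultiplying \eqref{eq:fonc2} by $\toep{N-n}{\vc{a}}$ is not where the difficulty lies: once $\yhat$ is defined from \eqref{eq:fonc2} and $\vc{\lambda}$ from \eqref{eq:doublefir}, the vanishing of \eqref{eq:doublefirsystem} recovers \eqref{eq:fonc3} exactly, and \eqref{eq:fonc1} then follows without any separate dimension count. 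So drop the speculative dimension-counting remark and instead supply the full-column-rank argument for the normalization step.
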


        \begin{proof}
            The multiparameter formulation in \cref{thm:fixedpoles} follows from \cref{lem:lowerordersolutions} and the opening paragraph of \cref{sec:reformulation}.
            When $\vc{a}$ is real, the product of the three Toeplitz matrices in~\eqref{eq:matrixpolynomial} is of full column rank, such that the first entry of the eigenvector $\vc{z}$ can always be normalized.
        \end{proof}

        When $\vc{e} = \begin{bmatrix} 0 & \cdots & 0 & 1 \end{bmatrix}^\tr$ is used as the normalization vector, the non-triviality constraint~\eqref{eq:fonc4} can be eliminated from the system by substituting $b_0 = 1$ in~\eqref{eq:doublefirsystem}.
        A similar adaptation of the MEP in~\eqref{eq:matrixpolynomial} is also possible.

        \begin{remark}
            Our methodology developed in~\cite{Lagauw2025} and the methodology that follows from \cref{thm:fixedpoles} are both globally optimal, but w.r.t. a different optimization problem. 
            To avoid confusion, we call the approach from~\cite{Lagauw2025} the standard globally optimal realization (S-GOR) technique and the approach that follows from \cref{thm:fixedpoles} the fixed-pole globally optimal realization (FP-GOR) technique.
        \end{remark}

        \begin{remark}
            \label{rem:prefiltering}
            Recall that in the TSD technique~\cite{Majda1989} the observed output data $\y$ are prefiltered to remove the effects of the fixed poles and standard least squares realization techniques are used on the filtered $\y'$. 
            As became evident in \cref{ex:motivational}, this approach is heuristic since the prefilter is applied to the observed output data $\y$ and not to the model-compliant output data $\yhat$, such that also the misfit is prefiltered. 
            The FP-GOR technique essentially applies the same prefilter, but it does so on the to-be-estimated model-compliant output data $\yhat$.
            By changing the order of the Toeplitz matrices in~\eqref{eq:filter}, as shown in~\eqref{eq:swapped}, we get that $\toep{N - n}{\vc{b}} \toep{N - m}{\vc{c}} \yhat = \toep{N - n}{\vc{b}} \vchat{y}' = \vc{0}$, illustrating that the dynamics of the prefiltered model-compliant output data $\vchat{y}' = \toep{N - m}{\vc{c}} \yhat$ are completely determined by the unknown poles $\lbrace \pole[i] \rbrace_{i = m + 1}^n$.
        \end{remark}

    \subsection{How to solve the MEP in \cref{thm:fixedpoles}?}
        \label{sec:algorithms}

        In this paper, we refrain ourselves of diving too deep into possible solution algorithms for MEPs.\footnote{Note that the multivariate polynomial system in~\eqref{eq:doublefirsystem} could also be tackled directly by multivariate polynomial root-finding algorithms (such as homotopy continuation methods and resultant-based approaches).}
        Two software toolboxes exist that solve these problems, namely \textsc{MacaulayLab} and \textsc{MultiParEig}.
        The former uses block Macaulay matrices to construct a joint generalized eigenvalue problem that has the affine eigenvalues of the MEP as its solutions~\cite{Vermeersch2022}, while the latter uses operator determinants or randomized sketching to reach that same goal~\cite{Hochstenbach2024}. 
        More information about the solution algorithms can be found in~\cite{Vermeersch2025a}.
        
        While solving the MEP in \cref{thm:fixedpoles}, it becomes apparent that the more poles are known beforehand, the less variables are contained in the MEP, such that the computational complexity of determining the eigenvalues decreases for increasing $m$.
        There are two interesting limit cases:
        \begin{itemize}
            \item \Cref{thm:fixedpoles} results in the MEP from the S-GOR technique when $m = 0$.
            \item For $q = 1$ and $b_0 = 1$, the matrix polynomial in~\eqref{eq:matrixpolynomial} is a square, univariate matrix polynomial in $b_1$. 
        \end{itemize}

\section{Numerical Examples}
    \label{sec:numericalexamples}

    The normalization vector $\vc{e} = \begin{bmatrix} 0 & \ldots & 0 & 1 \end{bmatrix}^\tr$ is used in each of the following numerical examples, so that the non-triviality constraint~\eqref{eq:fonc4} and the variable $b_0$ can be eliminated in~\eqref{eq:doublefirsystem} or~\eqref{eq:matrixpolynomial}.

    \begin{example}[\cref{ex:motivational} continued]
        \label{ex:continuation}
        Pick again the observed output data $\y$ from \cref{ex:motivational}. 
        Suppose that we want to estimate the globally optimal second-order ($n = 2$) model for which one of the poles ($m = 1$) is fixed by~\eqref{eq:fixedpole} (the fixed $\pole[1]$ is actually the pole of the globally optimal first-order model).
        The resulting multivariate polynomial system~\eqref{eq:doublefirsystem} has $\num{5}$ equations in the variables $b_1$ and $\vc{g}$, corresponding to a cubic polynomial eigenvalue problem~\eqref{eq:matrixpolynomial} with $5 \times 5$ coefficient matrices. 
        Out of the $\num{13}$ affine eigenvalues, only one is real-valued: $b_1 = \num{-0.9538}$.
        The globally optimal solution has $\pole[2] = \num{0.9538}$ as its second pole and the misfit $\norm{\misfit}_2^2$ is equal to $\num{5.9112}$. 
        The associated model is
        \begin{equation}
            \begin{aligned}
                a(z) &= (z - \pole[1]) b(z) = (z + \num{0.9557})(z - \num{0.9538}) \\
                &= z^2 + \num{0.0019} z - \num{0.9116}.
            \end{aligned}
        \end{equation}
        
        Now, consider the standard least squares problem where both poles can be chosen freely, i.e., we do not fix the pole $\pole[1]$ beforehand.
        The globally optimal solution, $\pole[1] = \num{-0.5351}$ and  $\pole[2] = \num{0.9194}$, is found by solving a cubic two-parameter eigenvalue problem (via the S-GOR technique).
        The global minimum of this $n = 2$ fit, $\norm{\misfit}_2^2 = \num{3.8836}$, is smaller than the global minimum of the $n = 1$ fit with one given pole. 

        \Cref{tab:overview} contains the best model for each of the applied techniques.
        Although (global) optimality is achieved in each intermediate step (both for the $(n, m) = (1, 0)$ and $(n, m) = (2, 1)$ fit), we observe that the $n$th-order model obtained by concatenating the poles of FP-GOR differs from the globally optimal solution obtained via the S-GOR technique. 
    \end{example}
    
    \begin{table}
        \centering
        \caption{Numerical results when solving \cref{ex:continuation} via the different realization approaches. The naive prefilter (NPF) approach uses the optimal misfit associated with the first-order model, while the TSD prefiltering technique uses the heuristic from~\cite{Majda1989}. The FP-GOR technique finds the optimal solution subject to one fixed pole, but the S-GOR technique is not restricted by any \textit{a priori} information.} 
        \label{tab:overview}
        \begin{tabular}{ccccc}
            \toprule
            \textbf{approach} & $\bm{(n,m)}$ & $\bm{\norm{\misfit}_2^2}$ & $\bm{\pole[1]}$ & $\bm{\pole[2]}$ \\ 
            \midrule
            NPF (heuristic) & $(2,1)$ & $\tnum{8.4181}$ & $\tnum{-0.9557}^\P$ & $\tnum[2.4]{0.8630}$ \\
            TSD (heuristic) & $(2,1)$ & $\tnum{6.0070}$ & $\tnum{-0.9557}^\P$ & $\tnum[2.4]{0.9361}$ \\
            FP-GOR (optimal) & $(2,1)$ & $\tnum{5.9112}$ & $\tnum{-0.9557}^\P$ & $\tnum[2.4]{0.9538}$ \\
            \midrule
            S-GOR (optimal) & $(2,0)$ & $\tnum{3.8836}$ & $\tnum{-0.5351}\phantom{^\P}$ & $\tnum[2.4]{0.9194}$ \\ 
            \bottomrule
        \end{tabular} \\[3pt]
        \footnotesize \noindent $^\P$The pole $\pole[1] = \num{-0.9557}$ was fixed in the experiment.
    \end{table}

    \Cref{ex:continuation} illustrates a broader principle: the fact that each intermediate step in a methodology is executed optimally does not necessarily imply that the overall sequence of steps constitutes an optimal procedure.

    \begin{example}
        \label{ex:sanitycheck}
        We now perform an $(n, m) = (2, 1)$ estimate with one of the poles obtained form the S-GOR technique, i.e., $(n, m) = (2, 0)$.
        We take $\pole[1] = \num{-0.5351}$ from \cref{tab:overview} and use the FP-GOR technique to determine a globally optimal estimate for $\pole[2]$. 
        The result of solving~\eqref{eq:doublefirsystem} or~\eqref{eq:matrixpolynomial} is $\pole[2] = \num{0.9194}$, which coincides with the globally optimal result obtained via the S-GOR technique.
    \end{example}

    During the next example, we verify whether the motivation for the adapted methodology was correct: ``Does incorporating \textit{a priori} information about a subset of the poles lead to an improved estimation w.r.t.~the underlying exact system output dynamics in a noisy context?''

    \begin{example}
        \label{ex:statistics}

        \newcommand{\statexpmm}{\num{50}}
        \newcommand{\statexpmn}{\num{15}}
        \newcommand{\statexpms}{\ensuremath{\{0.05, 0.15, \allowbreak \ldots, 0.45\}}}
        \newcommand{\statexpml}{\ensuremath{-0.75}}

        Consider the third-order $n = 3$ autonomous dynamical model with poles $\pole[1,2] = e^{\pm 0.8j}$ and $\pole[3] = \statexpml$ defined by the state-space model $(\mt{A}, \mt{C})$:
        \begin{equation}
            \mt{A} = \mt{T}^\inv \!
            \begin{bmatrix}
                \operatorname{Re}(\pole[1]) & -\operatorname{Im}(\pole[1]) & 0 \\ 
                \operatorname{Im}(\pole[1]) & \phantom{-}\operatorname{Re}(\pole[1]) & 0 \\
                0 & \phantom{-}0 & \pole[3]
            \end{bmatrix} 
            \mt{T}, \quad \mt{C} = 
            \begin{bmatrix}
                2 & 2 & 2
            \end{bmatrix},
        \end{equation} 
        where $\mt{T} \in \Rset^{3 \times 3}$ can be any non-singular matrix.
        We can use this model to generate the exact output data $\ytrue \in \Rset^N$:
        \begin{equation}
            \label{eq:truedata}
            x_k = \mt{C} \mt{A}^{k} \vc{x_0}, \quad k = 0, 1, \dots, N - 1, 
        \end{equation}
        with initial state $\vc{x}_0 = \begin{bmatrix} 1 & 1 & 1 \end{bmatrix}^\tr$.
        The observed output data $\y \in \Rset^N$ are obtained from the following noisy setup:
        \begin{equation}
            \label{eq:givendata}
            y_k = x_k + \sigma \epsilon_k, \quad k = 0, 1, \dots, N - 1, 
        \end{equation} 
        where $\epsilon_k \sim \mathcal{N}(0,1)$ such that $\sigma \epsilon_k \sim \mathcal{N}(0,\sigma^2)$. 
        
        Given $\statexpmm$ different realizations of length $N = \statexpmn$ for every noise level $\sigma \in \statexpms$, we compute estimates $\estpole[3]$ and the corresponding model-compliant output data $\yhat^{(i)}$ via two approaches: firstly, by using the S-GOR technique to find all three poles without \textit{a priori} information, i.e., $(n, m) = (3, 0)$, and secondly, by using the FP-GOR technique, in which we assume the complex pole pair $\pole[1,2] = e^{\pm 0.8j}$ to be fixed while computing $\estpole[3]$, i.e., $(n, m) = (3, 2)$.

        \ifarxiv
            \begin{figure}
                \centering
                \begin{subfigure}[t]{\textwidth}
                    \centering
                    \pgfplotsset{legend image code/.code = {\draw[#1] (0cm,-0.1cm) rectangle (0.6cm,0.1cm);}}

\begin{tikzpicture}[trim axis left, trim axis right]
    \begin{axis}[
        boxplotstyle,
        xmin = 0, 
        xmax = 2,
        ymin = 0.5, 
        ymax = 5.5,
        ylabel = {noise level $\sigma$},
        xlabel = {$\Vert\y^{(i)} - \yhat^{(i)}\Vert_2^2$},
    ]

        \addplot+[boxplot prepared = {
            median = 0.1526,
            upper quartile = 0.1769,
            lower quartile = 0.1332,
            upper whisker = 0.2177,
            lower whisker = 0.0677,
            draw position = 0.85
        }] coordinates {};
        \label{plot:sgor}

        \addplot+[boxplot prepared = {
            median = 0.1722,
            upper quartile = 0.2028,
            lower quartile = 0.1509,
            upper whisker = 0.2503,
            lower whisker = 0.0865,
            draw position = 1.15
        }] coordinates {};
        \label{plot:fpgor}

        %
      
        \addplot+[boxplot prepared = {
            median = 0.4605,
            upper quartile = 0.5313,
            lower quartile = 0.4007,
            upper whisker = 0.6533,
            lower whisker = 0.2191,
            draw position = 1.85
        }] coordinates {};

        \addplot+[boxplot prepared = {
            median = 0.5111,
            upper quartile = 0.6046,
            lower quartile = 0.4543,
            upper whisker = 0.7509,
            lower whisker = 0.2757,
            draw position = 2.15
        }] coordinates {};

        %
      
        \addplot+[boxplot prepared = {
            median = 0.7688,
            upper quartile = 0.8835,
            lower quartile = 0.6693,
            upper whisker = 1.0891,
            lower whisker = 0.3781,
            draw position = 2.85
        }] coordinates {};

        \addplot+[boxplot prepared = {
            median = 0.8512,
            upper quartile = 1.0040,
            lower quartile = 0.7597,
            upper whisker = 1.2516,
            lower whisker = 0.4779,
            draw position = 3.15
        }] coordinates {};

        %
      
        \addplot+[boxplot prepared = {
            median = 1.0779,
            upper quartile = 1.2176,
            lower quartile = 0.9369,
            upper whisker = 1.5250,
            lower whisker = 0.5426,
            draw position = 3.85
        }] coordinates {};

        \addplot+[boxplot prepared = {
            median = 1.1889,
            upper quartile = 1.3700,
            lower quartile = 1.0668,
            upper whisker = 1.5765,
            lower whisker = 0.6870,
            draw position = 4.15
        }] coordinates {};

        %
      
        \addplot+[boxplot prepared = {
            median = 1.3365,
            upper quartile = 1.5349,
            lower quartile = 1.2028,
            upper whisker = 1.9611,
            lower whisker = 0.7106,
            draw position = 4.85
        }] coordinates {};

        \addplot+[boxplot prepared = {
            median = 1.5088,
            upper quartile = 1.7163,
            lower quartile = 1.3752,
            upper whisker = 2.0123,
            lower whisker = 0.9002,
            draw position = 5.15
        }] coordinates {};

        \addplot+[only marks, mark = *, mark size = 1pt, draw = black, fill = black] coordinates {(0.0610, 0.85)}; 
        \addplot+[only marks, mark = *, mark size = 1pt, draw = black, fill = black] coordinates {(0.2011, 1.85)}; 
        \addplot+[only marks, mark = *, mark size = 1pt, draw = black, fill = black] coordinates {(0.1850, 1.85)}; 
        \addplot+[only marks, mark = *, mark size = 1pt, draw = black, fill = black] coordinates {(0.3324, 2.85)}; 
        \addplot+[only marks, mark = *, mark size = 1pt, draw = black, fill = black] coordinates {(0.3112, 2.85)}; 
        \addplot+[only marks, mark = *, mark size = 1pt, draw = black, fill = black] coordinates {(0.4622, 3.85)}; 
        \addplot+[only marks, mark = *, mark size = 1pt, draw = black, fill = black] coordinates {(0.4391, 3.85)}; 
        \addplot+[only marks, mark = *, mark size = 1pt, draw = black, fill = black] coordinates {(0.5911, 4.85)}; 
        \addplot+[only marks, mark = *, mark size = 1pt, draw = black, fill = black] coordinates {(0.5683, 4.85)}; 
    \end{axis}
\end{tikzpicture}
                    \caption{comparison to given data}
                    \label{fig:statistics:costvalues}
                \end{subfigure}
                \begin{subfigure}[t]{\textwidth}
                    \centering
                    \begin{tikzpicture}[trim axis left, trim axis right]
    \begin{axis}[
        boxplotstyle,
        xmin = 0, 
        xmax = 2,
        ymin = 0.5, 
        ymax = 5.5,
        ylabel = {noise level $\sigma$},
        xlabel = {$\Vert\ytrue - \yhat^{(i)}\Vert_2^2$},
    ]
    
        \addplot+[boxplot prepared = {
            median = 0.1195,
            upper quartile = 0.1428,
            lower quartile = 0.1020,
            upper whisker = 0.2016,
            lower whisker = 0.0689,
            draw position = 0.85
        }] coordinates {};

        \addplot+[boxplot prepared = {
            median = 0.0908,
            upper quartile = 0.1135,
            lower quartile = 0.0807,
            upper whisker = 0.1553,
            lower whisker = 0.0466,
            draw position = 1.15
        }] coordinates {};

        %
      
        \addplot+[boxplot prepared = {
            median = 0.3627,
            upper quartile = 0.4292,
            lower quartile = 0.3076,
            upper whisker = 0.5344,
            lower whisker = 0.2057,
            draw position = 1.85
        }] coordinates {};

        \addplot+[boxplot prepared = {
            median = 0.2685,
            upper quartile = 0.3347,
            lower quartile = 0.2437,
            upper whisker = 0.4492,
            lower whisker = 0.1395,
            draw position = 2.15
        }] coordinates {};

        %
      
        \addplot+[boxplot prepared = {
            median = 0.6062,
            upper quartile = 0.7119,
            lower quartile = 0.5304,
            upper whisker = 0.9025,
            lower whisker = 0.3414,
            draw position = 2.85
        }] coordinates {};

        \addplot+[boxplot prepared = {
            median = 0.4438,
            upper quartile = 0.5640,
            lower quartile = 0.4071,
            upper whisker = 0.7227,
            lower whisker = 0.2323,
            draw position = 3.15
        }] coordinates {};

        %
      
        \addplot+[boxplot prepared = {
            median = 0.8726,
            upper quartile = 1.0386,
            lower quartile = 0.7445,
            upper whisker = 1.4656,
            lower whisker = 0.4763,
            draw position = 3.85
        }] coordinates {};

        \addplot+[boxplot prepared = {
            median = 0.6464,
            upper quartile = 0.8755,
            lower quartile = 0.5846,
            upper whisker = 1.1925,
            lower whisker = 0.3252,
            draw position = 4.15
        }] coordinates {};

        %
      
        \addplot+[boxplot prepared = {
            median = 1.2098,
            upper quartile = 1.4833,
            lower quartile = 0.9829,
            upper whisker = 2.1241,
            lower whisker = 0.6104,
            draw position = 4.85
        }] coordinates {};

        \addplot+[boxplot prepared = {
            median = 0.8533,
            upper quartile = 1.2248,
            lower quartile = 0.7517,
            upper whisker = 1.7816,
            lower whisker = 0.4179,
            draw position = 5.15
        }] coordinates {};

        \addplot+[only marks, mark = *, mark size = 1pt, draw = black, fill = black] coordinates {(0.3275, 1.15)}; 
        
        \addplot+[only marks, mark = *, mark size = 1pt, draw = black, fill = black] coordinates {(0.3636, 1.15)}; 

        \addplot+[only marks, mark = *, mark size = 1pt, draw = black, fill = black] coordinates {(0.6181, 1.85)}; 

        \addplot+[only marks, mark = *, mark size = 1pt, draw = black, fill = black] coordinates {(0.4913, 2.15)}; 

        \addplot+[only marks, mark = *, mark size = 1pt, draw = black, fill = black] coordinates {(0.5675, 2.15)}; 
        
        \addplot+[only marks, mark = *, mark size = 1pt, draw = black, fill = black] coordinates {(0.4833, 2.15)}; 


        


        \addplot+[only marks, mark = *, mark size = 1pt, draw = black, fill = black] coordinates {(0.1638, 1.15)}; 

        \addplot+[only marks, mark = *, mark size = 1pt, draw = black, fill = black] coordinates {(0.1729, 1.15)}; 

        \addplot+[only marks, mark = *, mark size = 1pt, draw = black, fill = black] coordinates {(2.0988, 5.15)}; 

        \addplot+[only marks, mark = *, mark size = 1pt, draw = black, fill = black] coordinates {(1.0432, 2.85)}; 

        \addplot+[only marks, mark = *, mark size = 1pt, draw = black, fill = black] coordinates {(0.8186, 3.15)}; 

        \addplot+[only marks, mark = *, mark size = 1pt, draw = black, fill = black] coordinates {(0.9849, 3.15)}; 
     
        \addplot+[only marks, mark = *, mark size = 1pt, draw = black, fill = black] coordinates {(0.8391, 3.15)}; 






        \addplot+[only marks, mark = *, mark size = 1pt, draw = black, fill = black] coordinates {(1.5440, 3.85)}; 

        \addplot+[only marks, mark = *, mark size = 1pt, draw = black, fill = black] coordinates {(1.7795, 3.85)}; 

        \addplot+[only marks, mark = *, mark size = 1pt, draw = black, fill = black] coordinates {(1.5090, 3.85)}; 

        \addplot+[only marks, mark = *, mark size = 1pt, draw = black, fill = black] coordinates {(1.5120, 4.15)}; 

        \addplot+[only marks, mark = *, mark size = 1pt, draw = black, fill = black] coordinates {(1.7688, 4.15)}; 

        \addplot+[only marks, mark = *, mark size = 1pt, draw = black, fill = black] coordinates {(1.3913, 4.15)}; 
        
        \addplot+[only marks, mark = *, mark size = 1pt, draw = black, fill = black] coordinates {(1.4360, 4.15)}; 
        

        
        
    \end{axis}
\end{tikzpicture}
                    \caption{comparison to exact data}
                    \label{fig:statistics:exactvalues}
                \end{subfigure}
                \caption{Analysis of the model-compliant output data $\yhat^{(i)}$ for the different realization experiments, $i = 1, 2, \ldots, 50$, in \cref{ex:statistics}. One pole of a third-order model is estimated by the standard (S-GOR) and fixed-pole (FP-GOR) realization approach from given output data $\y \in \Rset^{\statexpmn}$, which is generated using~\eqref{eq:givendata} for noise levels $\sigma \in \statexpms$. From \cref{fig:statistics:costvalues} it seems that the S-GOR technique (\ref{plot:sgor}) results in a smaller misfit $\Vert\y^{(i)} - \yhat^{(i)}\Vert_2^2$ than the FP-GOR technique (\ref{plot:fpgor}). However, the latter obtains better results when comparing $\yhat^{(i)}$ with the exact data $\ytrue$ generated by the underlying model, as shown in \cref{fig:statistics:exactvalues}.}
                \label{fig:statistics}
            \end{figure}

            \begin{figure}
                \centering
                \begin{tikzpicture}[trim axis left, trim axis right]
    \begin{axis}[
        figurestyle, 
        xlabel={realization $i$ \vphantom{$\norm{\y - \yhat}_2^2$}},
        ylabel= {realization $j$},
        view={0}{90},
        xmin = 1,
        xmax = 50,
        ymin = 1,
        ymax = 50,
        xtick = {1,10,20,30,40,50},
        ytick = {1,10,20,30,40,50},
        colormap = {gradient}{color = (black) color = (blue) color = (white) color = (red) color = (black)},
        colorbar,
        colorbar style = {
            colorbarstyle,
            title = {$d$}, 
            ymin = -13, 
            ymax = 13, 
            ytick = {-13, 0, 13},
            yticklabels = {$<0$, $=0$, $>0$}
        },
    ]
        \addplot3 [surf, mesh/cols = 50, mesh/rows = 50] table[col sep = comma] {figures/crossdata.dat};
    \end{axis}
\end{tikzpicture}
                \caption{Cross-validation for noise level $\sigma = 0.25$. It visualizes the difference between S-GOR and FP-GOR, $d = \Vert\misfit^{(i,j)}_\text{S-GOR}\Vert_2^2 - \Vert\misfit^{(i,j)}_\text{FP-GOR}\Vert_2^2$, of the misfit $\Vert\misfit^{(i,j)}\Vert_2^2 = \Vert\y^{(i)} - \yhat^{(j)}\Vert_2^2$ for all values $i,j = 1, 2, \ldots, 50$. This qualitative measure visualizes when the model obtained by FP-GOR describes the given data better ($d > 0$) or worse ($d < 0$) than S-GOR for other realizations ($i \neq j$). On the off-diagonal, the FP-GOR misfit is generally smaller than the S-GOR misfit, indicating a better generalization.}
                \label{fig:statistics:crossvalidation}
            \end{figure}
        \else
            \begin{figure*}
                \centering
                \begin{subfigure}[t]{0.30\textwidth}
                    \centering
                    \pgfplotsset{legend image code/.code = {\draw[#1] (0cm,-0.1cm) rectangle (0.6cm,0.1cm);}}

\begin{tikzpicture}[trim axis left, trim axis right]
    \begin{axis}[
        boxplotstyle,
        xmin = 0, 
        xmax = 2,
        ymin = 0.5, 
        ymax = 5.5,
        ylabel = {noise level $\sigma$},
        xlabel = {$\Vert\y^{(i)} - \yhat^{(i)}\Vert_2^2$},
    ]

        \addplot+[boxplot prepared = {
            median = 0.1526,
            upper quartile = 0.1769,
            lower quartile = 0.1332,
            upper whisker = 0.2177,
            lower whisker = 0.0677,
            draw position = 0.85
        }] coordinates {};
        \label{plot:sgor}

        \addplot+[boxplot prepared = {
            median = 0.1722,
            upper quartile = 0.2028,
            lower quartile = 0.1509,
            upper whisker = 0.2503,
            lower whisker = 0.0865,
            draw position = 1.15
        }] coordinates {};
        \label{plot:fpgor}

        %
      
        \addplot+[boxplot prepared = {
            median = 0.4605,
            upper quartile = 0.5313,
            lower quartile = 0.4007,
            upper whisker = 0.6533,
            lower whisker = 0.2191,
            draw position = 1.85
        }] coordinates {};

        \addplot+[boxplot prepared = {
            median = 0.5111,
            upper quartile = 0.6046,
            lower quartile = 0.4543,
            upper whisker = 0.7509,
            lower whisker = 0.2757,
            draw position = 2.15
        }] coordinates {};

        %
      
        \addplot+[boxplot prepared = {
            median = 0.7688,
            upper quartile = 0.8835,
            lower quartile = 0.6693,
            upper whisker = 1.0891,
            lower whisker = 0.3781,
            draw position = 2.85
        }] coordinates {};

        \addplot+[boxplot prepared = {
            median = 0.8512,
            upper quartile = 1.0040,
            lower quartile = 0.7597,
            upper whisker = 1.2516,
            lower whisker = 0.4779,
            draw position = 3.15
        }] coordinates {};

        %
      
        \addplot+[boxplot prepared = {
            median = 1.0779,
            upper quartile = 1.2176,
            lower quartile = 0.9369,
            upper whisker = 1.5250,
            lower whisker = 0.5426,
            draw position = 3.85
        }] coordinates {};

        \addplot+[boxplot prepared = {
            median = 1.1889,
            upper quartile = 1.3700,
            lower quartile = 1.0668,
            upper whisker = 1.5765,
            lower whisker = 0.6870,
            draw position = 4.15
        }] coordinates {};

        %
      
        \addplot+[boxplot prepared = {
            median = 1.3365,
            upper quartile = 1.5349,
            lower quartile = 1.2028,
            upper whisker = 1.9611,
            lower whisker = 0.7106,
            draw position = 4.85
        }] coordinates {};

        \addplot+[boxplot prepared = {
            median = 1.5088,
            upper quartile = 1.7163,
            lower quartile = 1.3752,
            upper whisker = 2.0123,
            lower whisker = 0.9002,
            draw position = 5.15
        }] coordinates {};

        \addplot+[only marks, mark = *, mark size = 1pt, draw = black, fill = black] coordinates {(0.0610, 0.85)}; 
        \addplot+[only marks, mark = *, mark size = 1pt, draw = black, fill = black] coordinates {(0.2011, 1.85)}; 
        \addplot+[only marks, mark = *, mark size = 1pt, draw = black, fill = black] coordinates {(0.1850, 1.85)}; 
        \addplot+[only marks, mark = *, mark size = 1pt, draw = black, fill = black] coordinates {(0.3324, 2.85)}; 
        \addplot+[only marks, mark = *, mark size = 1pt, draw = black, fill = black] coordinates {(0.3112, 2.85)}; 
        \addplot+[only marks, mark = *, mark size = 1pt, draw = black, fill = black] coordinates {(0.4622, 3.85)}; 
        \addplot+[only marks, mark = *, mark size = 1pt, draw = black, fill = black] coordinates {(0.4391, 3.85)}; 
        \addplot+[only marks, mark = *, mark size = 1pt, draw = black, fill = black] coordinates {(0.5911, 4.85)}; 
        \addplot+[only marks, mark = *, mark size = 1pt, draw = black, fill = black] coordinates {(0.5683, 4.85)}; 
    \end{axis}
\end{tikzpicture}
                    \vspace{-0.2cm}
                    \caption{comparison to given data}
                    \label{fig:statistics:costvalues}
                \end{subfigure}
                \begin{subfigure}[t]{0.30\textwidth}
                    \centering
                    \begin{tikzpicture}[trim axis left, trim axis right]
    \begin{axis}[
        boxplotstyle,
        xmin = 0, 
        xmax = 2,
        ymin = 0.5, 
        ymax = 5.5,
        ylabel = {noise level $\sigma$},
        xlabel = {$\Vert\ytrue - \yhat^{(i)}\Vert_2^2$},
    ]
    
        \addplot+[boxplot prepared = {
            median = 0.1195,
            upper quartile = 0.1428,
            lower quartile = 0.1020,
            upper whisker = 0.2016,
            lower whisker = 0.0689,
            draw position = 0.85
        }] coordinates {};

        \addplot+[boxplot prepared = {
            median = 0.0908,
            upper quartile = 0.1135,
            lower quartile = 0.0807,
            upper whisker = 0.1553,
            lower whisker = 0.0466,
            draw position = 1.15
        }] coordinates {};

        %
      
        \addplot+[boxplot prepared = {
            median = 0.3627,
            upper quartile = 0.4292,
            lower quartile = 0.3076,
            upper whisker = 0.5344,
            lower whisker = 0.2057,
            draw position = 1.85
        }] coordinates {};

        \addplot+[boxplot prepared = {
            median = 0.2685,
            upper quartile = 0.3347,
            lower quartile = 0.2437,
            upper whisker = 0.4492,
            lower whisker = 0.1395,
            draw position = 2.15
        }] coordinates {};

        %
      
        \addplot+[boxplot prepared = {
            median = 0.6062,
            upper quartile = 0.7119,
            lower quartile = 0.5304,
            upper whisker = 0.9025,
            lower whisker = 0.3414,
            draw position = 2.85
        }] coordinates {};

        \addplot+[boxplot prepared = {
            median = 0.4438,
            upper quartile = 0.5640,
            lower quartile = 0.4071,
            upper whisker = 0.7227,
            lower whisker = 0.2323,
            draw position = 3.15
        }] coordinates {};

        %
      
        \addplot+[boxplot prepared = {
            median = 0.8726,
            upper quartile = 1.0386,
            lower quartile = 0.7445,
            upper whisker = 1.4656,
            lower whisker = 0.4763,
            draw position = 3.85
        }] coordinates {};

        \addplot+[boxplot prepared = {
            median = 0.6464,
            upper quartile = 0.8755,
            lower quartile = 0.5846,
            upper whisker = 1.1925,
            lower whisker = 0.3252,
            draw position = 4.15
        }] coordinates {};

        %
      
        \addplot+[boxplot prepared = {
            median = 1.2098,
            upper quartile = 1.4833,
            lower quartile = 0.9829,
            upper whisker = 2.1241,
            lower whisker = 0.6104,
            draw position = 4.85
        }] coordinates {};

        \addplot+[boxplot prepared = {
            median = 0.8533,
            upper quartile = 1.2248,
            lower quartile = 0.7517,
            upper whisker = 1.7816,
            lower whisker = 0.4179,
            draw position = 5.15
        }] coordinates {};

        \addplot+[only marks, mark = *, mark size = 1pt, draw = black, fill = black] coordinates {(0.3275, 1.15)}; 
        
        \addplot+[only marks, mark = *, mark size = 1pt, draw = black, fill = black] coordinates {(0.3636, 1.15)}; 

        \addplot+[only marks, mark = *, mark size = 1pt, draw = black, fill = black] coordinates {(0.6181, 1.85)}; 

        \addplot+[only marks, mark = *, mark size = 1pt, draw = black, fill = black] coordinates {(0.4913, 2.15)}; 

        \addplot+[only marks, mark = *, mark size = 1pt, draw = black, fill = black] coordinates {(0.5675, 2.15)}; 
        
        \addplot+[only marks, mark = *, mark size = 1pt, draw = black, fill = black] coordinates {(0.4833, 2.15)}; 


        


        \addplot+[only marks, mark = *, mark size = 1pt, draw = black, fill = black] coordinates {(0.1638, 1.15)}; 

        \addplot+[only marks, mark = *, mark size = 1pt, draw = black, fill = black] coordinates {(0.1729, 1.15)}; 

        \addplot+[only marks, mark = *, mark size = 1pt, draw = black, fill = black] coordinates {(2.0988, 5.15)}; 

        \addplot+[only marks, mark = *, mark size = 1pt, draw = black, fill = black] coordinates {(1.0432, 2.85)}; 

        \addplot+[only marks, mark = *, mark size = 1pt, draw = black, fill = black] coordinates {(0.8186, 3.15)}; 

        \addplot+[only marks, mark = *, mark size = 1pt, draw = black, fill = black] coordinates {(0.9849, 3.15)}; 
     
        \addplot+[only marks, mark = *, mark size = 1pt, draw = black, fill = black] coordinates {(0.8391, 3.15)}; 






        \addplot+[only marks, mark = *, mark size = 1pt, draw = black, fill = black] coordinates {(1.5440, 3.85)}; 

        \addplot+[only marks, mark = *, mark size = 1pt, draw = black, fill = black] coordinates {(1.7795, 3.85)}; 

        \addplot+[only marks, mark = *, mark size = 1pt, draw = black, fill = black] coordinates {(1.5090, 3.85)}; 

        \addplot+[only marks, mark = *, mark size = 1pt, draw = black, fill = black] coordinates {(1.5120, 4.15)}; 

        \addplot+[only marks, mark = *, mark size = 1pt, draw = black, fill = black] coordinates {(1.7688, 4.15)}; 

        \addplot+[only marks, mark = *, mark size = 1pt, draw = black, fill = black] coordinates {(1.3913, 4.15)}; 
        
        \addplot+[only marks, mark = *, mark size = 1pt, draw = black, fill = black] coordinates {(1.4360, 4.15)}; 
        

        
        
    \end{axis}
\end{tikzpicture}
                    \vspace{-0.2cm}
                    \caption{comparison to exact data}
                    \label{fig:statistics:exactvalues}
                \end{subfigure}
                \begin{subfigure}[t]{0.30\textwidth}
                    \centering
                    \begin{tikzpicture}[trim axis left, trim axis right]
    \begin{axis}[
        figurestyle, 
        xlabel={realization $i$ \vphantom{$\norm{\y - \yhat}_2^2$}},
        ylabel= {realization $j$},
        view={0}{90},
        xmin = 1,
        xmax = 50,
        ymin = 1,
        ymax = 50,
        xtick = {1,10,20,30,40,50},
        ytick = {1,10,20,30,40,50},
        colormap = {gradient}{color = (black) color = (blue) color = (white) color = (red) color = (black)},
        colorbar,
        colorbar style = {
            colorbarstyle,
            title = {$d$}, 
            ymin = -13, 
            ymax = 13, 
            ytick = {-13, 0, 13},
            yticklabels = {$<0$, $=0$, $>0$}
        },
    ]
        \addplot3 [surf, mesh/cols = 50, mesh/rows = 50] table[col sep = comma] {figures/crossdata.dat};
    \end{axis}
\end{tikzpicture}
                    \vspace{-0.2cm}
                    \caption{cross-validation for $\sigma = 0.25$}
                    \label{fig:statistics:crossvalidation}
                \end{subfigure}
                \caption{Analysis of the model-compliant output data $\yhat^{(i)}$ for the different realization experiments, $i = 1, 2, \ldots, 50$, in \cref{ex:statistics}. One pole of a third-order model is estimated by the standard (S-GOR) and fixed-pole (FP-GOR) realization approach from given output data $\y \in \Rset^{\statexpmn}$, which is generated using~\eqref{eq:givendata} for noise levels $\sigma \in \statexpms$. From \cref{fig:statistics:costvalues} it seems that the S-GOR technique (\ref{plot:sgor}) results in a smaller misfit $\Vert\y^{(i)} - \yhat^{(i)}\Vert_2^2$ than the FP-GOR technique (\ref{plot:fpgor}). However, the latter obtains better results when comparing $\yhat^{(i)}$ with the exact data $\ytrue$ generated by the underlying model, as shown in \cref{fig:statistics:exactvalues}. For one noise level, \cref{fig:statistics:crossvalidation} visualizes the difference between S-GOR and FP-GOR, $d = \Vert\misfit^{(i,j)}_\text{S-GOR}\Vert_2^2 - \Vert\misfit^{(i,j)}_\text{FP-GOR}\Vert_2^2$, of the misfit $\Vert\misfit^{(i,j)}\Vert_2^2 = \Vert\y^{(i)} - \yhat^{(j)}\Vert_2^2$ for all values $i,j = 1, 2, \ldots, 50$. This qualitative measure visualizes when the model obtained by FP-GOR describes the given data better ($d > 0$) or worse ($d < 0$) than S-GOR for other realizations ($i \neq j$). On the off-diagonal, the FP-GOR misfit is generally smaller than the S-GOR misfit, indicating a better generalization.}
                \label{fig:statistics}
                \vspace{-0.4cm}
            \end{figure*}
        \fi

        For the S-GOR technique, the MEP has $\num{6466}$ solutions, out of which only a small subset is real-valued, while the FP-GOR technique requires solving a univariate problem that has only $\num{37}$ solutions. 
        The standard approach is computationally much more expensive than the fixed-pole approach. 

        It is clear from \cref{fig:statistics:costvalues} that the misfits $\Vert \y^{(i)} - \yhat^{(i)}\Vert_2^2$, $i = 1, 2, \ldots, \statexpmm$, of the S-GOR approach are consistently lower than the ones obtained by using the FP-GOR approach.
        This is as expected since the former is guaranteed to reach the global minimum of the standard least squares realization problem~\eqref{eq:fulloptimizationproblem}, which is a lower-bound for the fixed-pole least squares realization problem~\eqref{eq:fixedpolesoptimizationproblem}.
        Indeed, fixing the poles $\pole[1,2]$ reduces the degrees of freedom.
        For this specific problem setup, the estimated $\estpole[3]$ appear typically closer to the actual $\pole[3] = -0.75$ with S-GOR than with FP-GOR.
        This occurs because S-GOR can distribute noise influence across all three poles, while FP-GOR has to concentrate it only in $\estpole[3]$.

        However, in practice, minimizing the difference to the exact output data, $\Vert\ytrue - \yhat^{(i)}\Vert_2^2$, is more valuable than the misfit, $\Vert\y^{(i)} - \yhat^{(i)}\Vert_2^2$, though the former can only be verified when the exact output data is available.
        \cref{fig:statistics:exactvalues} indicates that the model-compliant output data $\yhat^{(i)}$ obtained by using the FP-GOR technique are significantly closer to the noise-free output data $\ytrue$ than with the S-GOR technique. 
        Incorporating \textit{a priori} information about the pole pair $\pole[1,2]$ enhances the model's ability to distinguish signal from noise, thereby improving estimation accuracy.

        Moreover, the cross-validation analysis in \cref{fig:statistics:crossvalidation} demonstrates that S-GOR overfits to the noise of individual output data sequences, while the \textit{a priori} information in FP-GOR generalizes better across the different realizations; its model-compliant output data $\yhat^{(i)}$ align better with the other $\y^{(j)}$ output data sequences ($i \neq j$).
    \end{example}

\section{Conclusions and Future Work}
    \label{sec:conclusion}

    We presented an adaptation of the standard least squares realization problem that exploits fixed-pole information in the identification of autonomous, single-output, linear time-invariant dynamical models.
    By embedding \textit{a priori} information into the globally optimal formulation of~\cite{Lagauw2025}, we adapted the (rectangular) multiparameter formulation to the fixed-pole setting. 
    This formulation allows computing all local and global minimizers of the constrained problem, establishing a globally optimal methodology for the identification of the unknown poles and outperforming existing heuristics.
    Moreover, as some of the poles of the model are fixed, the number of unknowns decreases, which in turn leads to a reduction in computational complexity compared to~\cite{Lagauw2025} for obtaining the globally optimal solution(s).
    We validated the proposed approach through a set of numerical examples, one of which illustrated that the model-compliant output sequences obtained by incorporating \textit{a priori} information more accurately approximate the exact, noise-free signal than those produced by the standard formulation.

    This investigation gives rise to several opportunities for future work. 
    Firstly, our experimental analysis was limited in scope. 
    A more extensive evaluation of the estimator under a broader range of conditions will be of significant interest. 
    In particular, we expect that for sufficiently large differences $q = n - m$, the fixed-pole approach will lead to more accurate estimates of the most dominant unknown poles.
    It would also be interesting to investigate how sensitive these estimates are w.r.t.~perturbations in the fixed-pole information.
    Secondly, the computational properties of the multiparameter formulations need further investigation, particularly in terms of scalability and  stability for large, realistic problems.

    \bibliographystyle{plain}
    \bibliography{references}
\end{document}